\newcommand{\bbA}{{\mathbb A}}
\newcommand{\bbC}{{\mathbb C}}
\newcommand{\bbN}{{\mathbb N}}
\newcommand{\bbQ}{{\mathbb Q}}
\newcommand{\bbR}{{\mathbb R}}
\newcommand{\bbZ}{{\mathbb Z}}
\newcommand{\calK}{{\mathcal K}}
\newcommand{\calL}{{\mathcal L}}
\newcommand{\calP}{{\mathcal P}}
\newcommand{\calO}{{\mathcal O}}
\newcommand{\frakI}{{\mathfrak I}}
\newcommand{\frakM}{{\mathfrak M}}
\newcommand{\frakR}{{\mathfrak R}}
\newcommand{\Ima}{{\ensuremath{\operatorname{\frakI}}}}
\newcommand{\Rea}{{\ensuremath{\operatorname{\frakR}}}}
\newcommand{\conv}{{\ensuremath{\operatorname{conv}}}}
\newcommand{\lin}{{\ensuremath{\operatorname{lin}}}}
\newcommand{\rank}{{\ensuremath{\operatorname{rank}}}}
\newcommand{\vol}{{\ensuremath{\operatorname{vol}}}}
\newcommand{\isom}{\mathrel{\cong}}
\newcommand{\GL}{{\ensuremath{\operatorname{GL}}}}
\DeclarePairedDelimiter\abs{\lvert}{\rvert}
\newcommand{\fixedv}{{\overline{v}}}
\DeclarePairedDelimiterX\Sett[2]{\{}{\}}{\,#1\vphantom{R^m}\,\delimsize\vert\,#2\vphantom{R^m}\,}
\DeclarePairedDelimiterX\Set[1]{\{}{\}}{\,#1\vphantom{R^m}\,}
\DeclarePairedDelimiterX\grind[2]{[}{]}{\,#1\vphantom{R^m_i}\,\mathord{:}\,#2\vphantom{R^m_i}\,}
\theoremstyle{plain}
\newtheorem{thm}{Theorem}[section]
\newtheorem{prop}[thm]{Proposition}
\newtheorem{lem}[thm]{Lemma}
\theoremstyle{definition}
\newtheorem{rem}[thm]{Remark}
\newtheorem{defn}[thm]{Definition}
\newtheorem{exmp}[thm]{Example}
\numberwithin{equation}{section}
\begin{document}

\title[Note on adelic triangulations]{Note on adelic triangulations and an Adelic Blichfeldt-type inequality}
\author{Martin Henk}
\address{Fakult\"at f\"ur Mathematik, Otto-von-Guericke-Universit\"at
  Magdeburg, Universit\"atsplatz 2, 39106 Magdeburg, Germany}
\email{martin.henk@ovgu.de, carsten.thiel@gmail.com}
\author{Carsten Thiel}



\begin{abstract}
  We introduce a notion of convex hull and polytope into adele space.
  This allows to consider adelic triangulations which, in particular, 
  lead to an adelic blichfeldt-type inequality, complementing former results.
\end{abstract}


\maketitle

\newcommand{\idealclassbound}{{\ensuremath\Gamma_K}}

\section{Introduction}
Let $\calK^m_0$ be the set of all $0$-symmetric convex bodies in the
$m$-dimensional Euclidean space $\bbR^m$ with non-empty interior, i.e., 
$C\in \calK^m_0$ is an $m$-dimensional  compact convex set satisfying
$C=-C$.  
An important 
subclass of convex bodies are formed by
polytopes $P=\conv\{v_1,\dots,v_l \}$, i.e.,
the convex hull of finitely many points $v_1,\ldots,v_l\in\bbR^m$. We
write $\calP^m$ and  $\calP_0^m$ for the set of all, respectively the
set of all $0$-symmetric, $m$-dimensional polytopes in $\bbR^m$.

By a lattice $\Lambda\subset \bbR^m$ we understand a free
$\bbZ$-module of rank $\rank\Lambda \leq m$.   The set of all lattices
in $\bbR^m$ is denoted by $\calL^m$, and $\det\Lambda$ denotes the determinant of $\Lambda\in\calL^n$,
that is the $(\rank\Lambda)$-dimensional volume of a fundamental cell
of $\Lambda$. For more detailed information on lattices we refer to
\cite{Gruber:2007um, Gruber:1987vp}.   

One of the classical inequalities relating a convex body  and
 points of a lattice is Blichfeldt's inequality  
from 1921 (see, e.g.,~\cite{Blichfeldt:1920vv}). It gives an upper bound on the
number of lattice points of a lattice $\Lambda\in \calL^m$ contained
in a convex body $C\in\calK^m$ under the assumption that
$\dim_\bbR(C\cap\Lambda)=m$, i.e., $C\cap\Lambda$ contains $m+1$
affinely independent lattice points 
\begin{equation}\label{eq:classicalblichfeldt}
  \abs{C\cap\Lambda} \leq m!\, \frac{\vol(C)}{\det\Lambda} +m. 
\end{equation} 
The bound is sharp, for instance for $\Lambda=\bbZ^m$ and simplices 
of the form $\conv\Set[\big]{0,\ell e_1,\allowbreak e_2,\ldots,e_m}$,
where $\ell\in\bbN$ and $e_1,\ldots,e_m$ are the standard unit
vectors.  The additional requirement on the dimension is necessary, as an axis-parallel box with 
very small edge length in one direction can contain a large number of lattice points,
while still having arbitrarily small volume.

The usually way to prove Blichfeldt's result, as many other results in the
context of the interplay of lattice points and convex bodies, is via
triangulations (cf., e.g., \cite{Beck:2007vv, Loera:2010p9983}).  To this end one firstly notices that  by replacing $C$
by $\conv\{C\cap \Lambda\}$ 
it suffices to
prove the bound for the class of lattice polytopes $P\in \calP^m$,
i.e., polytopes admitting a representation as
$\conv\{w_1,\dots,w_l\}$ where $w_i$ are points of a lattice $\Lambda$.
The next observation is that $P$ can be triangulated  in at least
$|P\cap\Lambda|-m$  many lattice simplices and the volume of a lattice
simplex is at least $\det\Lambda/m!$.

In this note we want to introduce the notion of convex hull and
triangulations in the adele space, which has been proved in recent
years as an
 excellent and challenging space for extensions  and generalizations
 of classical concepts from Geometry of numbers, see e.g.,
 \cite{Bombieri:2009bo, Bombieri:1983uz,
   Fukshansky:2006ti, Fukshansky:2006db, Fukshansky:2010iw, Fukshansky:2013vo,  Gaudron:2008df, Gaudron:2009iu,
   Gaudron:2012hq,   
   Roy:1995p7975, Thiel:2012iz, Thunder:1998p7981, Thunder2002}
    as the references within.  

After a short introduction to adelic geoemtry in Section~\ref{sec:adelicgeometry}, we
introduce in Section~\ref{sec:adelicpolytopes} our notion of the adelic convex hull and 
adelic polytopes. In particular, we will prove a lower bound on the
adelic volume of an adelic lattice simplex in the case of totally real
fields (Lemma \ref{lem:adelicsimplexvolume}). 
In Section~\ref{sec:adelictriangulation} we  study 
adelic triangulations, which we use to prove our main result  
\begin{thm}\label{thm:adelicblichfeldttotallyreal}
  Let $K$ be a totally real number field of degree $d=[K:\bbQ]$.
  Let $C$ be an adelic convex body with $\dim_K(C\cap K^n)=n$. 
  Then
  \[
    \abs[\big]{C\cap K^n} \leq (n!)^d \vol_\bbA(C) +n.
  \]
\end{thm}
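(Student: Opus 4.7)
The plan is to adapt the classical three-step proof of Blichfeldt's inequality recalled in the introduction to the adelic setting. First, I would replace the adelic convex body $C$ by the adelic convex hull $P$ of the finite set $C \cap K^n$, as defined in Section~\ref{sec:adelicpolytopes}. By construction $P \cap K^n = C \cap K^n$, and since $P \subseteq C$ at every place, the adelic volume satisfies $\vol_\bbA(P) \leq \vol_\bbA(C)$. This reduces the theorem to the case of an adelic lattice polytope $P$ whose vertex set lies in $K^n$.

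Next, I would apply the adelic triangulation results of Section~\ref{sec:adelictriangulation} to decompose $P$ into adelic lattice simplices $S_1,\dots,S_t$ with essentially disjoint interiors, in such a way that every point of $P \cap K^n$ occurs as a vertex of some $S_i$. Just as in the classical placing argument, this forces $t \geq \abs{P \cap K^n} - n$, and by additivity of the adelic volume
\[
\vol_\bbA(P) = \sum_{i=1}^t \vol_\bbA(S_i).
\]

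Finally, I would invoke Lemma~\ref{lem:adelicsimplexvolume}, which in the totally real case asserts $\vol_\bbA(S_i) \geq 1/(n!)^d$ for every adelic lattice simplex. Assembling the three steps yields
\[
\vol_\bbA(C) \geq \vol_\bbA(P) \geq \frac{t}{(n!)^d} \geq \frac{\abs{C \cap K^n} - n}{(n!)^d},
\]
which rearranges to the claimed inequality.

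The hard part will be the second step. Classically the bound $t \geq \abs{P \cap K^n} - n$ follows from the combinatorics of a placing triangulation, but in the adelic framework one has to make sense of a simplicial subdivision whose cells are coherent across every place of $K$ and then confirm that every $K^n$-point of $P$ can indeed be forced to be a vertex; this is presumably the technical content of Section~\ref{sec:adelictriangulation}. A secondary point is the monotonicity $\vol_\bbA(\conv(C \cap K^n)) \leq \vol_\bbA(C)$ together with the equality of $K^n$-points under taking the adelic hull, which should follow from the definitions of Section~\ref{sec:adelicpolytopes} but has to be verified separately at the archimedean and nonarchimedean components, since the two types of places contribute to $\vol_\bbA$ in genuinely different ways.
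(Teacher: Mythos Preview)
Your overall three-step strategy matches the paper's, but your plan for the second step is both harder than necessary and, as stated, would not work. You ask for an adelic triangulation of $P$ whose cells are ``coherent across every place of $K$'' and which gives the \emph{equality} $\vol_\bbA(P)=\sum_i\vol_\bbA(S_i)$. The paper shows explicitly (see the remark and Examples~\ref{exmp:missingpint1}, \ref{exmp:missingpint2} following the proof) that such a decomposition does not exist in general: for the simplices $S_j$ built from the vertices one has $S_1\cup\dots\cup S_k\subsetneq P$, so additivity fails.

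The key observation you are missing is that equality is never needed---an inequality suffices---and that disjointness of the $S_j$ in the adelic volume sense can be obtained by triangulating at a \emph{single} archimedean place. Concretely, fix one real embedding $\fixedv$, triangulate the ordinary Euclidean polytope $P_{\fixedv}\subset\bbR^n$ using all the points $\fixedv(a_i)$ as vertices (this gives $k\ge |C\cap K^n|-n$ simplices by the classical placing argument), and let $S_j$ be the adelic simplex on the corresponding $a_i$'s. Then $S_{j,\fixedv}\cap S_{i,\fixedv}$ has Lebesgue measure zero in $\bbR^n$ for $i\ne j$, and since $\vol_\bbA$ is a product measure with the $\fixedv$-factor appearing, this already forces $\vol_\bbA(S_i\cap S_j)=0$. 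Together with the easy inclusion $S_1\cup\dots\cup S_k\subset P$ one gets $\vol_\bbA(P)\ge\sum_j\vol_\bbA(S_j)\ge k/(n!)^d$, which is all that is required. No coherence across places and no full covering of $P$ is needed.
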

The necessary  notations will be introduced in Section \ref{sec:adelicgeometry}. 
We remark that for $d=1$ and  $n=m$ we get Blichfeldt's inequality
\eqref{eq:classicalblichfeldt}. Moreover, 
Theorem~\ref{thm:adelicblichfeldttotallyreal} improves for the special
case of totally real fields on a former
more general result of Gaudron \cite{Gaudron:2009iu} (see \eqref{eq:gaudronsblichfeldt}) for
$0$-symmetric adelic convex bodies.  This result, as well as other
adelic symmetric variants of Blichfeldt's
theorem will be presented in the final  Section~\ref{sec:symmcase}.

\section{Adelic geometry}\label{sec:adelicgeometry}
In this section we will briefly introduce the notations and concepts
from adelic geometry used in the following sections. For a detailed
discussion we refer to \cite{Bombieri:2009bo, Lang:1994cr, Neukirch:1999ky}.

Let $K$ be an algebraic number field of degree $d=[K:\bbQ]$.
Let $r$ be the number of real and $s$ the number of pairs of complex embeddings 
of $K$ into $\bbC$, so $d=r+2s$. Denote by 
$\calO$ the ring of algebraic integers of $K$ and by $\Delta_K$ and
$h_K$ its field discriminant and class number, respectively. 

Let $M(K)$ be the set of all places of $K$.
For $v\in M(K)$ we write $v\nmid\infty$ for non-archimedean places
and  $v\mid\infty$ for the archimedean ones.
We write $\abs{\,\cdot\,}_v$ for the corresponding absolute value on $K$.
We normalise it to extend either the usual absolute value on $\bbQ$ for archimedean places 
or the $p$-adic absolute value for a prime $p$.
Then the local field $K_v$ is the completion of $K$ with respect to $v$.
For $v\nmid\infty$ let $\calO_v$ be the local ring of integers of $K_v$.

Let $K_\bbA$ be the ring of adeles of $K$ and $K_\bbA^n$ the standard module of rank $n\geq 2$,
i.e.,\ the $n$-fold product of adeles.
Recall that $K_\bbA$ is the restricted direct product of the $K_v$ with respect to the $\calO_v$.
For any $v\in M(K)$ let $d_v=[K_v:\bbQ_v]$ be the local degree ($\bbQ_\infty\isom\bbR$).
Then 
\begin{equation}\label{eq:productformulaetc}
 d=\sum_{v\mid \infty} d_v\,,\quad\text{and for all non-zero $a\in K$}\quad
\prod_{v\in M(K)} \abs{a}_v^{d_v}=1.
\end{equation}
For $v\nmid\infty$ let $\mu_v$ be the Haar measure on $K_v$ normalized such that
$\mu_v(\calO_v)=1$.
Thus for any ideal $\alpha\calO_v\subseteq\calO_v$ we get
$\mu_v(\alpha\calO_v)=\abs{\alpha}_v^{d_v}$.
For $v\mid\infty$ let $\mu_v$ be the Lebesque measure on $K_v=\bbR$ 
resp.\ twice the Lebesque measure on $K_v=\bbC$. 
Define the Haar measure $\vol_\bbA$ on $K_\bbA^1$ by
\[\vol_\bbA=\frac{1}{\sqrt{\abs{\Delta_K}}} \ \prod_{v\in M(K)} \mu_v\]
and use the product measure on $K_\bbA^n$.

\begin{defn}[Adelic convex body]\label{def:adelicconvexbody}
For each $v\nmid\infty$ let $C_v$ be a free $\calO_v$-module of full rank,
where $C_v=\calO_v^n$ for all but finitely many $v$.
In other words, for any $v\nmid\infty$ there is an $A_v\in\GL_n(K_v)$ such that
$C_v=A_v\calO_v^n$, where $A_v\in\GL_n(\calO_v)$ for all but finitely many $v$.
For $v\mid\infty$ we have $K_v\isom\bbR$ or $K_v\isom\bbC$. 
In this case let $C_v\subset K_v^n$ be a compact convex body with 
non-empty interior in $\bbR^n$ or $\bbC^n\isom\bbR^{2n}$ respectively, i.e.,
$C_v\in\calK^n$ or $C_v\in\calK^{2n}$.
Then the set
\[C = \prod_{v\nmid\infty} C_v \times \prod_{v\mid\infty} C_v\]
is called an \emph{adelic convex body}.
If $C_v$ is symmetric for $v\mid\infty$, i.e., $C_v\in\calK_0^n$ or $C_v\in\calK_0^{2n}$,
we call $C$ a \emph{$0$-symmetric adelic convex body}.
\end{defn}

For $(x_v)_v\in K_\bbA^n$ we define the scalar multiple $(y_v)_v=\lambda(x_v)_v$ for $\lambda\in\bbR^+$ by
\[
y_v\coloneqq\begin{cases}\phantom{\lambda}x_v &\text{if } v\nmid\infty\,,\\
\lambda\, x_v &\text{if } v\mid\infty\,.\end{cases}
\]

Denote by $\sigma_i$, $1\leq i \leq r$, the embeddings of $K$ into $\bbR$
and by $\sigma_{r+i}=\overline{\sigma}_{r+i+s}$, $1\leq i\leq s$, the pairs 
of embeddings of $K$ into $\bbC$. If $s=0$, 
 then $K$ is called a \emph{totally real} field. For instance,
 $\bbQ[\sqrt{2}]$ is  totally real, but not $\bbQ[\sqrt[3]{2}]$.

Let $\overline{\,\cdot\,}$ denote  complex conjugation in $\bbC$,
cf.~\cite{Blanksby:1978tw}. Then
\begin{align*}
  \iota &\colon x\mapsto\bigl(\sigma_1(x),\ldots,\sigma_{r}(x),
  \sigma_{r+1}(x),\ldots,\sigma_{r+s}(x)\bigr)\\
\shortintertext{and}
\overline{\iota} &\colon x\mapsto\bigl(\sigma_1(x),\ldots,\sigma_{r}(x),
  \overline{\sigma}_{r+1}(x),\ldots,\overline{\sigma}_{r+s}(x)\bigr)
\end{align*}
are embeddings of $K$ into $K_\infty=\prod_{v\mid\infty}K_v$. 
There is a canonical isomorphism $\rho\colon K_\infty\rightarrow \bbR^{d}$ with
\begin{equation}\begin{multlined}\label{eq:isombetweenKinftyandRnd}
  \rho\bigl(x_1,\ldots,x_{r},x_{r+1},\ldots,x_{r+s}\bigr)= \\
  \qquad\bigl(x_1,\ldots,x_{r},\Rea(x_{r+1}),\Ima(x_{r+1}),\ldots,\Rea(x_{r+s}),\Ima(x_{r+s})\bigr)\,.
\end{multlined}\end{equation}
Here $\Rea$ and $\Ima$ denote real and imaginary parts, respectively.

Together we get a map $(\rho\circ\iota)\colon K\hookrightarrow \bbR^d$,
that sends a field element to the vector whose entries are the images under the real and complex embeddings,
splitting the latter points into real and imaginary part,
\[
  x\mapsto\bigl(\sigma_1(x),\ldots,\sigma_{r}(x),
      \Rea(\sigma_{r+1}(x)),\Ima(\sigma_{r+1}(x)),\ldots,\Rea(\sigma_{r+s}(x)),\Ima(\sigma_{r+s}(x))\bigr)\,.
\]
In the rank-$n$-case let $K_\infty^n=\prod_{v\mid\infty}K_v^n$,
\[
  \iota^n \coloneqq (\sigma_1^n,\ldots,\sigma_{r}^n,\sigma_{r+1}^n,\ldots,\sigma_{r+s}^n) 
  \colon K^n\rightarrow K_\infty^n\,,
\]
and $\overline{\iota}^n$ analogously,
where the $\sigma_i$ act component-wise.
Similarly $\rho^n\colon K_\infty^n\rightarrow\bbR^{nd}$.
To simplify notation, we usually write $\rho$ and $\iota$ instead of $\rho^n$ and $\iota^n$.

Throughout the paper, we use the following notation 
\begin{equation}\label{eq:def:frakm}
  C_\infty=\prod_{v\mid\infty} C_v
  \quad\text{and}\quad
  \frakM=\bigcap_{v\nmid\infty}\bigl(C_v\cap K^n\bigl)\,.
\end{equation}
Observe that our standard embedding $\rho\circ\iota : K^n\hookrightarrow\bbR^{nd}$,
cf.~\eqref{eq:isombetweenKinftyandRnd} and thereafter, is injective, 
and therefore
\begin{equation}\label{eq:adelicandclassciallatticepointsarethesame}
  \abs[\big]{C \cap K^n}= \abs[\Big]{\rho\Bigl( \prod_{v\mid\infty} C_v \Bigr) \cap \rho(\iota(\frakM)) }\,.
\end{equation}
We will use this important connection for some of the proofs below.

\section{adelic polytopes}\label{sec:adelicpolytopes}

We start by giving local definitions of \emph{convex hull} of points 
$\overline{a}_0,\ldots,\overline{a}_m\in K_\bbA^n$, where
$\overline{a}_{k,v}$ is the $v$-entry of $\overline{a}_{k}$ for  $v\in
M(K)$. 
To exclude degenerate cases, we always require that for all $v\in M(K)$ we have
\[
  \lin_{K_{v}}\Sett*{\overline{a}_{k,v} }{ 0\leq k\leq m}=K_v^n\,,
\]
and for all but finitely many $v\nmid\infty$, the entries of
$\overline{a}_{k,v}$ are in $\calO^\ast_v$, where, as usual,
$\calO^\ast_v$ denotes the group of units in $\calO_v$.

For $v\nmid\infty$ define the module 
\begin{equation}\label{eq:localpidgenerator}
  C_v =\conv_v\Set*{\overline{a}_{0,v},\ldots,\overline{a}_{m,v}}=
  \calO_v\overline{a}_{0,v}+\ldots+\calO_v\overline{a}_{m,v}\,.
\end{equation}
Note that $C_v$ is an $\calO_v$-module in $K_v^n$ of full rank.
In fact, we have $C_v=\lin_{\calO_v}\{\overline{a}_{0,v},\allowbreak\ldots,\overline{a}_{m,v}\}$,
the minimal $\calO_v$-module in $K_v^n$ containing all the points. 
Since the $K_v$ are local fields, there exist $A_v\in\GL_n(K_v)$, such that
$C_v=A_v\calO^n_v$. Note that in general for $t\in K^n$
\[
  \conv_v\Set*{\overline{a}_{0,v}+t,\ldots,\overline{a}_{m,v}+t}\neq\conv_v\Set*{\overline{a}_{0,v},\ldots,\overline{a}_{m,v}}+t,
\]
but if $t\in\Set*{-\overline{a}_{0,v},\ldots,-\overline{a}_{m,v}}$, we
certainly have 
\begin{equation}\label{eq:vfiniteconvinclusion}
  \conv_v\Set*{\overline{a}_{0,v}+t,\ldots,\overline{a}_{m,v}+t}\subseteq\conv_v\Set*{\overline{a}_{0,v},\ldots,\overline{a}_{m,v}},
\end{equation}
as the points on the left are contained in the $\bbZ$-span of the points on the right.

For $v\mid\infty$ real, let
\begin{align}
  C_v&=\conv_\bbR\Set*{\overline{a}_{0,v},\ldots,\overline{a}_{m,v}}\label{eq:defrealnonsymmconvhull}\\
  &=\Sett[\Bigg]{\sum_{i=0}^m \lambda_i \overline{a}_{i,v}}{\lambda_i\in\bbR\,,\ 0\leq\lambda_i\leq 1\,,\ \sum_{i=0}^m \lambda_i=1}\subset\bbR^n\nonumber
\end{align}
and
\begin{align}
  C_v^\diamond&=\conv_\bbR\Set*{\pm\overline{a}_{0,v},\ldots,\pm\overline{a}_{m,v}}\label{eq:defrealsymmconvhull}\\
  &=\Sett[\Bigg]{\sum_{i=0}^m \lambda_i \overline{a}_{i,v}}{\lambda_i\in\bbR\,,\ 0\leq \abs{\lambda_i}\leq 1\,,\ \sum_{i=0}^m
  \abs{\lambda_i}\leq 1}\subset\bbR^n\,.\nonumber
\end{align}
These are the standard convex hull of points in real space and its symmetric
variant. They are equivalent to defining the bodies as the intersection of all (symmetric)
 convex bodies containing the points $\overline{a}_{0,v},\ldots,\overline{a}_{m,v}$.

For $v\mid\infty$ complex, we only define the symmetric body
\begin{align}
  C_v^\diamond&=\conv_\bbC\Set*{\overline{a}_{0,v},\ldots,\overline{a}_{m,v}}\label{eq:defcomplexsymmconvhull}\\
  &=\Sett[\Bigg]{\sum_{i=0}^m \lambda_i \overline{a}_{i,v}}{\lambda_i\in\bbC\,,\ 0\leq
    \abs{\lambda_i}\leq 1\,,\ \sum_{i=0}^m \abs{\lambda_i}\leq 1}\subset\bbC^n\,.\nonumber
\end{align}
By construction, this is the intersection of all symmetric convex
bodies in complex space containing
the points $\overline{a}_{0,v},\ldots,\overline{a}_{m,v}$.
We are not aware of any more general notion of convex hull in complex spaces.
When identifying $\bbC^n\isom\bbR^{2n}$, we can use the definitions used in the real case,
but the bodies obtained in this way lie in a real (affine) subspace of $\bbR$-dimension $n$ in $\bbR^{2n}$
and do thus not define an adelic convex body. This is the reason why
we will consider arbitrary  adelic polytopes only in the case of
totally real fields.   

Using our constructions of convex hull, we can now define the following special classes of adelic convex bodies.

\begin{defn}[Adelic convex hull, polytopes, simplices and cross-polytopes]
  Given $\overline{a}_0,\ldots,\overline{a}_m\in K_\bbA^n$ as before, we define
  \[
    C^\diamond=\conv_\bbA^\diamond\Set*{\overline{a}_0,\ldots,\overline{a}_m}
    = \prod_{v\nmid\infty} A_v \calO_v^n \times\prod_{v\mid\infty} C_v^\diamond
  \]
  with $A_v$ implicitly defined by \eqref{eq:localpidgenerator} above
  and $C_v^\diamond$ as in \eqref{eq:defrealsymmconvhull} and \eqref{eq:defcomplexsymmconvhull},
  as the \emph{symmetric adelic convex hull} of $\overline{a}_0,\ldots,\overline{a}_m$.
  \index{adelic!convex hull}\index{convex hull}
  If $K$ is totally real, we define the \emph{adelic convex hull} of
  $\overline{a}_0,\ldots,\overline{a}_m$ as
  \[
    C=\conv_\bbA\Set*{\overline{a}_0,\ldots,\overline{a}_m}
    = \prod_{v\nmid\infty} A_v \calO_v^n \times\prod_{v\mid\infty} C_v\,,
  \]
  where $C_v$ is defined as in \eqref{eq:defrealnonsymmconvhull}.
  In case $m=n$, we speak of the \emph{adelic cross-polytope} and \emph{adelic simplex} respectively.
  All of these bodies will also be called \emph{adelic polytopes}
  and if $C$ is an adelic polytope, it can be written as $C=\prod_{v\in M(K)}C_v$ with local
  bodies $C_v$ defined as above.
\end{defn}

Denote by $\sigma_v\colon K\rightarrow K_v$ the inclusion of $K$
into $K_v$ and by abuse of notation
also $\sigma_v\colon K^n\rightarrow K_v^n$ for all places
$v$ of $K$.

\begin{defn}[Adelic lattice polytopes, simplices, cross-polytopes]
  Given points $a_0,\ldots,a_m\in K^n$ that span $K^n$, 
  identify 
  \[
    a_k=\overline{a}_k=\left(\sigma_v(a_k) \middle| v\in M(K)\vphantom{R^m}\right)\,.
  \]
  Then
  \[
    C=\conv_\bbA\Set*{a_0,\ldots,a_m}
      \quad\text{and}\quad
    C^\diamond=\conv_\bbA^\diamond\Set*{a_0,\ldots,a_m}
  \]
  are the \emph{adelic lattice polytope} and \emph{symmetric adelic lattice polytope} 
  generated by  $a_0,\ldots,a_m$, respectively.
  The body $C$ is, of course, again only defined for $K$ totally real,
  and for $m=n$ we call $C$ an \emph{adelic lattice simplex}. 
If   additionally $a_0=0$, $C^\diamond$ will be called an \emph{adelic lattice cross-polytope}.
\end{defn}

\begin{rem}
  The intersection of two adelic polytopes is again an adelic polytope,
  since this property holds for all $v$ and two adelic polytopes differ only for
  finitely many $v$ and for arbitrary sets $X$, $Y$ and $Z$ we have
  \[
    (X\times Y)\cap(X\times Z)=\Sett*{(x,y)}{x\in X\,,\ y\in Y\,,\ y\in Z}
    =X\times(Y\cap Z)\,.
  \]

  The intersection of two adelic lattice polytopes, however, is not an adelic lattice
  polytope in general
  .
\end{rem}

Adelic polytopes are not as nice as their classical real counterparts.
In Euclidean space, a polytope $P\in\calP^m$ can be written as both
the convex hull of a finite number of points or as the intersection of 
a finite number of closed half-spaces.
Given a linear functional $\ell:\bbR^m\rightarrow \bbR$, the kernel of $\ell$ is a hyperplane $H$
and using the ordering of $\bbR$, we decide whether two points $x_1,x_2\in\bbR^m$ lie on the same side of $H$
by comparing the signs of $\ell(x_1)$ and $\ell(x_2)$ and thereby also defining two half-spaces.

Such a construction is not possible in the adelic setting, as we do not have an ordering on $K_\bbA$.

Unfortunately, we also cannot expect to have an adelic counterpart to
the classical Ehrhart-theory for lattice polytopes. To this end we
recall that  
for a lattice polytope $P=\conv\Set{v_1,\ldots,v_s}\in\calP^m$, where 
$v_i$ are lattice points of a rank $m$ lattice $\Lambda$, we know by a theorem of
Ehrhart \cite{Ehrhart1962} that 
the number of lattice points in $k\, P$ for a positive integer $k$
is given by a polynomial of degree $m$, the Ehrhart polynomial\index{Ehrhart polynomial}
\[
  \abs[\big]{k P\cap \Lambda} = \sum_{i=1}^m G_i(P,\Lambda) k^i\,,\quad k\in\bbN\,.
\]
The polynomial is unique and the coefficients depend only on $P$ and $\Lambda$.
The behaviour and properties of this polynomial 
have been studied intensively,
see Beck and Robins~\cite{Beck:2007vv} for an overview as well as
e.g.\ McMullen~\cite{McMullen:1978p10992} and 
Linke~\cite{Linke:2011bw} for more specific results.

Now consider an adelic lattice polytope $C=\prod_{v} C_v$, and 
let $C_\infty$ and $\frakM$ be as before, cf.~\eqref{eq:def:frakm}.
Then $\rho(C_\infty)$ is a polytope in $\bbR^{nd}$, 
as the factors of $C_\infty$ are polytopes.
On account of \eqref{eq:adelicandclassciallatticepointsarethesame},
the number $\abs[\big]{k C\cap K^n}$ has to grow like $k^{nd}$.
On the other hand, the body $\rho(C_\infty)$ is in general not a lattice polytope
with respect to the lattice $\rho(\iota(\frakM))$.

\begin{exmp}
Consider for example $K=\bbQ[\sqrt{2}]$ for $n=1$ and the body $C=\prod_{v\nmid\infty}\calO_v\times[-1,1]^2$, 
which is an adelic lattice polytope in the sense above, as $C=\conv_\bbA\Set{\pm 1}$.
Observe, that $\frakM=\calO$.
Figure~\ref{fig:nubbqsqrt2} shows the embedding of $C$ into real space $\bbR^{nd}$.
\begin{figure}[ht]
\centering
\begin{tikzpicture}[scale=0.9]
\clip(-4.7,-3.1) rectangle (4.7,3.1);
\draw[gray!50,dashed,thick] (0,-4.5) -- (0,4.5);
\draw[gray!50,dashed,thick] (-4.5,0) -- (4.5,0);
\foreach \i in {-4,...,4}
{
  \draw (\i,\i) node[fill,shape=circle,scale=.3] {};
}
\foreach \i in {-3,...,3}
{
  \draw (\i-1.41,\i+1.41) node[fill,shape=circle,scale=.3] {};
  \draw (\i+1.41,\i-1.41) node[fill,shape=circle,scale=.3] {};
}
\foreach \i in {-1,...,1}
{
  \draw (\i-2.82,\i+2.82) node[fill,shape=circle,scale=.3] {};
  \draw (\i+2.82,\i-2.82) node[fill,shape=circle,scale=.3] {};
}
\foreach \i in {0,...,0}
{
  \draw (\i-4.23,\i+4.23) node[fill,shape=circle,scale=.3] {};
  \draw (\i+4.23,\i-4.23) node[fill,shape=circle,scale=.3] {};
}
\draw[thick] (1,1) -- (-1,1) -- (-1,-1) -- (1,-1) -- cycle;
\draw (-0.05,-0.1) node[anchor=south west]  {\scriptsize$\rho(0)$};
\draw (1,1.2) node[anchor=north west]  {\scriptsize$\rho(1)$};
\draw (1.45,-1.7) node[anchor=south west]  {\scriptsize$\rho(\sqrt{2})$};
\draw (2.7,-.41) node[fill=white,anchor=west]  {\scriptsize$\rho(1+\sqrt{2})$};
\draw (-1.1,-0.5) node[fill=white,anchor=east] {\scriptsize$\rho(C_\infty)$};
\end{tikzpicture}
\caption{Embedding of an adelic convex body into real space.}
\label{fig:nubbqsqrt2}
\end{figure}
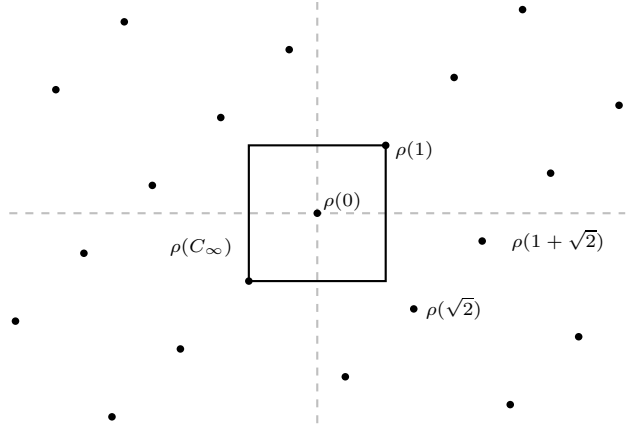
It is evident from the figure, that of the four vertices of $\rho\bigl([-1,1]^2\bigr)$ 
only $(1,1)$ and $(-1,-1)$ are lattice points but not $(1,-1)$ and $(-1,1)$
and thus $\rho(C_\infty)$ is not a lattice polytope with respect to $\rho(\iota(\calO))$.
However, the infinite part of any adelic convex body has to be of the form $C_\infty=[-a,a]\times[-b,b]$ for $a,b\in\bbR$.
But since $\rho(\iota(\calO))$ is generated by $(1,1),(-\sqrt{2},\sqrt{2})\in\bbR^2$,
no box can have only lattice points as vertices. Thus 
the image of $C$ under the embedding into $\bbR^2$ can not be a
lattice polytope, and 
we can therefore not find an adelic analogue to the Ehrhart
polynomial.
\end{exmp}

Next we deal with the adelic volume of lattice simplices. 
Let $e_1,\ldots,e_n$ be any basis of $K^n$, then it is known 
that the volume of the adelic lattice cross-polytope 
$C^\diamond=\conv_\bbA^\diamond\Set{e_1,\ldots,e_n}$
is (cf, e.g., \cite{Bombieri:1983uz})
\begin{equation}\label{lem:volumeofadeliccrosspolytope}
  \vol_\bbA(C^\diamond)=\frac{2^{dn}\pi^{s n}}{(n!)^{r} ((2n)!)^{s}}.
\end{equation} 
If $K$ is totally real and
$S=\conv_\bbA\Set{0,e_1,\ldots,e_n}=\prod_{v\in M(K)}S_v$ 
is an adelic lattice simplex, then for the local simplices $S_v$ 
at the infinite places we have  
\begin{equation}\label{eq:volumeofeuclideansimplex}
  \vol_v(S_v)=\frac{1}{n!}\qquad\text{for $v\mid\infty$ real.}
\end{equation}

\begin{lem}\label{lem:adelicsimplexvolume}
  Let $K$ be a totally real number field of degree $d=[K:\bbQ]$ and
  let $S=\conv_\bbA\Set{a_0,\ldots,a_n}$ be an adelic lattice simplex. Then
  \[
    \vol_\bbA(S)\geq \frac{1}{(n!)^d}\,.
  \]
\end{lem}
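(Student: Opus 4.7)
The plan is to introduce the single scalar
$\delta := \det(a_1 - a_0, \ldots, a_n - a_0) \in K$
that governs the simplex volume after each embedding, and then to invoke the product formula \eqref{eq:productformulaetc} applied to $\delta$. Because $S$ is an adelic convex body, each $S_v$ at an infinite place has non-empty interior; hence the points $\sigma_v(a_0),\ldots,\sigma_v(a_n)$ are affinely independent at every $v\mid\infty$, and in particular $\delta\in K\setminus\{0\}$. For each real place $v\mid\infty$ (with $d_v=1$, as $K$ is totally real) the local body $S_v$ is the Euclidean $n$-simplex with vertices $\sigma_v(a_i)$, so the classical simplex-volume formula yields
\[
\mu_v(S_v) \;=\; \frac{|\sigma_v(\delta)|}{n!} \;=\; \frac{|\delta|_v}{n!}.
\]

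For each finite place $v\nmid\infty$, I would apply the translation inclusion \eqref{eq:vfiniteconvinclusion} with $t=-a_0\in\{-a_0,\ldots,-a_n\}$ to obtain
\[
L_v \;:=\; \conv_v\{0,\,a_1-a_0,\ldots,a_n-a_0\} \;=\; \calO_v(a_1-a_0)+\cdots+\calO_v(a_n-a_0) \;\subseteq\; S_v.
\]
Since $\delta\neq 0$, the vectors $a_i-a_0$ are $K_v$-linearly independent, so $L_v=M\calO_v^n$ for the $n\times n$ matrix $M$ whose columns are these differences. The standard modulus identity then gives $\mu_v(L_v)=|\det M|_v^{d_v}=|\delta|_v^{d_v}$, and hence $\mu_v(S_v)\geq|\delta|_v^{d_v}$.

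Combining these local estimates, using that $d_v=1$ at every real place together with $\sum_{v\mid\infty} d_v = d$, and invoking the product formula \eqref{eq:productformulaetc}, I get
\[
\vol_\bbA(S) \;\geq\; \prod_{v\mid\infty}\frac{|\delta|_v}{n!}\cdot\prod_{v\nmid\infty}|\delta|_v^{d_v}
\;=\; \frac{1}{(n!)^d}\prod_{v\in M(K)}|\delta|_v^{d_v} \;=\; \frac{1}{(n!)^d}.
\]
The delicate step is the non-archimedean one: checking via \eqref{eq:vfiniteconvinclusion} that the $\calO_v$-module $L_v$ generated by the edge vectors really does embed into $S_v$, and keeping the modulus normalization $\mu_v(A\calO_v^n)=|\det A|_v^{d_v}$ consistent throughout. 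The totally-real hypothesis enters only to ensure that $\conv_\bbA\{a_0,\ldots,a_n\}$ is even defined at every archimedean place.
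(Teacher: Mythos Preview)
Your proof is correct and follows essentially the same route as the paper's. The paper first normalizes to $a_0=0$ via \eqref{eq:vfiniteconvinclusion} (passing to a submodule at the finite places) and then computes the resulting volume exactly as $\tfrac{1}{(n!)^d}$ using the product formula; you keep $a_0$ general, introduce $\delta=\det(a_1-a_0,\ldots,a_n-a_0)$, and bound each finite local volume from below by $|\delta|_v^{d_v}$ via the same inclusion---this is the same argument written without the preliminary translation.
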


\begin{proof}
  On account of \eqref{eq:vfiniteconvinclusion}, we may assume
  w.l.o.g.\ $a_0=0$, possibly switching to a subset at some finite
  places. Let $A=(a_1\ldots a_n)$ be the matrix whose columns are
  $a_1,\ldots,a_n$, and let $S=\prod_{v\in M(K)}C_v$.

  For $v\mid\infty$ we get from \eqref{eq:volumeofeuclideansimplex} that
  \begin{align*}
   \vol_v(C_v) &= \vol_v\bigl(\conv_\bbR\Set*{\sigma_v(a_1,),\ldots,\sigma_v(a_n)}\bigr)\\
    &=\frac{\abs*{\det\bigl(\sigma_v(a_1)\ldots\sigma_v(a_n)\bigr)}_\infty}{n!}
    =\frac{\abs*{\sigma_v\bigl(\det(A)\bigr)}_\infty}{n!}=\frac{\abs*{\sigma_v\bigl(\det(A)\bigr)}_\infty}{n!}\,.
  \end{align*}

  On the other hand, for $v\nmid\infty$, we have
  \[
    C_v=\calO_v \sigma_v(a_1) +\ldots+\calO_v \sigma_v(a_n)
       =\bigl(\sigma_v(a_1)\ldots\sigma_v(a_n)\bigr)\calO_v^n
  \]
  and thus
  \[
    \vol_v(C_v)=\abs*{\det\bigl(\sigma_v(a_1)\ldots\sigma_v(a_n)\bigr)}^{d_v}_v\,.
  \]

  Therefore, in view of \eqref{eq:productformulaetc} we get 
  \begin{align*}
    \vol_\bbA(S)&=\prod_{v\nmid\infty}\vol_v(C_v)\cdot \prod_{v\mid\infty}\vol_v(C_v)\\
    &=\prod_{v\nmid\infty} \abs{\det(A)}^{d_v}_v \cdot
    \frac{1}{(n!)^d}\prod_{v\mid\infty} \abs{\det(A)}_v^{1}\\
    &=\frac{1}{(n!)^d} \cdot 1\,.\qedhere
  \end{align*}
\end{proof}

\section{adelic triangulations}\label{sec:adelictriangulation}

Throughout  this section we assume that $K$ is totally real, i.e.,
$K_v=\bbR$ for all $v\mid\infty$, and we start with the proof of
Theorem \ref{thm:adelicblichfeldttotallyreal}
\begin{proof}[Proof of Theorem \ref{thm:adelicblichfeldttotallyreal}] Let $C\cap
  K^n=\{a_1,\ldots,a_{n+m}\}$. According to our assumption
  $\dim_K(C\cap K^n)=n$ we have $m\geq 1$, and let
  $P=\conv_\bbA\Set{a_1,\ldots,a_{n+m}}$. It suffices to prove the
  theorem for $P$. To this end fix an embedding 
$\fixedv\colon K^n\rightarrow \bbR^n$. Then 
\[
  P_{\fixedv}=\conv\Set*{\fixedv(a_1),\ldots,\fixedv(a_{n+m})}\subset\bbR^n
\]
is a polytope and 
there exists a triangulation $T_1,\ldots,T_k$ of $P_{\fixedv}$ with $k\geq m$ full-dimensional simplices, 
whose vertices are among the $\fixedv(a_i)$, see e.g.~Section 2.2 of
the book \cite{Loera:2010p9983}, to which we refer also for
more details on triangulations.
An element $T_j$ of this triangulation, i.e.\ an $(n+1)$-element set from the $n+m$ points, 
gives rise to an adelic simplex 
\[
  S_j=\conv_\bbA\Sett[\big]{a_i}{i\in T_j}
     =\conv_\bbA\Sett[\big]{a_i}{\fixedv(a_i)\text{ is a vertex of } T_j}
\]
with $S_{j,\fixedv}=T_j$.
Since the triangulation fulfills $\dim(T_j\cap T_i)<n$ for $j\neq i$,
we get $\vol_v(S_{j,\fixedv}\cap S_{i,\fixedv})=0$ and thus
$\vol_\bbA(S_j\cap S_i)=0$ for $i\neq j$.
On the other hand,
\[
  P=\conv_\bbA\Set{a_1,\ldots,a_{n+m}}\supset S_1\cup\ldots\cup S_k\,.
\]
Hence by Lemma~\ref{lem:adelicsimplexvolume} we conclude 
\begin{equation}\label{eq:adeliclatticepolytopevolumelowerbound}
  \vol_\bbA(P)\geq k\cdot \frac{1}{(n!)^d}
  \geq\frac{m}{(n!)^d}\,.
\end{equation}
\end{proof}

\begin{rem}
The construction in the proof above, however, does not give a full triangulation of $P$, since in
general 
\[
  S_1\cup\ldots\cup S_k \subsetneq P\,,
\]
see Examples~\ref{exmp:missingpint1} and \ref{exmp:missingpint2} below.
Example~\ref{exmp:missingpint2} does also show that the dependence on $m$ can not be
improved in general, whereas a minimal triangulation of $P_{\fixedv}$ does not
necessarily give rise to a minimal set of adelic simplices, as
Example~\ref{exmp:missingpint1} shows.
\end{rem}

\bigskip

\begin{exmp}\label{exmp:missingpint1}
  Let $K=\bbQ[\sqrt{2}]$ and $n=2$. Let $P$ be the adelic convex hull of
  \[
    a=(\sqrt{2},1)\,,\quad b=(1,3)\,,\quad
    c=(2,3)\quad\text{and}\quad d=(1,\sqrt{2})\in K^2\,.
  \]
  Then for $v\nmid\infty$ we get $P_v=\calO_v^2$ and
  the two convex bodies at the infinite places $v_1$ and $v_2$ 
  with corresponding real embeddings $\sigma_1$ and $\sigma_2$ are
  $P_{v_1}$ and $P_{v_2}$ as depicted in the figure.
\begin{center}
\begin{tikzpicture}[scale=.8]
\draw[gray!50,dashed,thick] (0,-1.6) -- (0,3.5);
\draw[gray!50,dashed,thick] (-1.6,0) -- (3.5,0);
\draw (1.41,1) -- (1,1.41)  -- (1,3) -- (2,3) --  cycle;
\draw[gray,thick] (1.41,1) -- (1,3);
\draw[gray,thick] (2,3) -- (1,1.41);
\draw (-1.2,3.5) node[anchor=north]  {\scriptsize$P_{v_1}$};
\draw (1.41,1) node[fill,shape=circle,scale=.3] {};
\draw (1.41,1) node[anchor=north]  {\scriptsize$\sigma_1(a)$};
\draw (2,3) node[fill,shape=circle,scale=.3] {};
\draw (2,3) node[anchor=south]  {\scriptsize$\sigma_1(b)$};
\draw (1,3) node[fill,shape=circle,scale=.3] {};
\draw (1,3) node[anchor=south]  {\scriptsize$\sigma_1(c)$};
\draw (1,1.41) node[fill,shape=circle,scale=.3] {};
\draw (1,1.41) node[anchor=east]  {\scriptsize$\sigma_1(d)$};
\draw (1.55,2.7) node[fill,shape=circle,scale=.3] {};
\draw (1.62,2.7) node[anchor=east]  {\scriptsize$z_1$};
\end{tikzpicture}
\hspace{1cm}
\begin{tikzpicture}[scale=.8]
\draw[gray!50,dashed,thick] (0,-1.6) -- (0,3.5);
\draw[gray!50,dashed,thick] (-1.6,0) -- (3.5,0);
\draw (-1.41,1) -- (1,3) -- (2,3) -- (1,-1.41)  -- cycle;
\draw[gray,thick] (-1.41,1) -- (2,3);
\draw[gray,thick] (1,3) -- (1,-1.41);
\draw (-1.2,3.5) node[anchor=north]  {\scriptsize$P_{v_2}$};
\draw (-1.41,1) node[fill,shape=circle,scale=.3] {};
\draw (-1.41,1) node[anchor=east]  {\scriptsize$\sigma_2(a)$};
\draw (2,3) node[fill,shape=circle,scale=.3] {};
\draw (2,3) node[anchor=south]  {\scriptsize$\sigma_2(b)$};
\draw (1,3) node[fill,shape=circle,scale=.3] {};
\draw (1,3) node[anchor=south]  {\scriptsize$\sigma_2(c)$};
\draw (1,-1.41) node[fill,shape=circle,scale=.3] {};
\draw (1,-1.41) node[anchor=west]  {\scriptsize$\sigma_2(d)$};
\draw (.5,1.5) node[fill,shape=circle,scale=.3] {};
\draw (.5,1.5) node[anchor=north]  {\scriptsize$z_2$};
\end{tikzpicture}
\end{center}
The adelic simplices
\begin{align*}
  S_1&=\conv_\bbA\Set{a,b,c} & S_2&=\conv_\bbA\Set{a,b,d}\\
  S_3&=\conv_\bbA\Set{a,c,d} & S_4&=\conv_\bbA\Set{b,c,d}
\end{align*}
all satisfy $S_{j,v}=\calO_v^2$ for $v\nmid\infty$
and $\vol(S_j\cap S_i)=0$ for all $j\neq i$ since the intersection at one
infinite place is always lower-dimensional.
Therefore \eqref{eq:adeliclatticepolytopevolumelowerbound} is not optimal in
this case as $P$ contains four disjoint simplices,
even though the indicated triangulations of $P_{v_1}$ and  $P_{v_2}$ are minimal.

The point $z=(z_1,z_2)$ indicated in the picture is not contained in any $S_j$.
\end{exmp}

\begin{exmp}\label{exmp:missingpint2}
  Consider again $K=\bbQ[\sqrt{2}]$ and let $P$ be the adelic convex hull of
  \[
    a=(1,1)\,,\quad b=(2,1)\,,\quad
    c=(2,2)\quad\text{and}\quad d=(1,2)\in K^2\,.
  \]
  Then for $v\nmid\infty$ we get $P_v=\calO_v^2$ and
  the two convex bodies at the infinite places $v_1$ and $v_2$ 
  with corresponding real embeddings $\sigma_1$ and $\sigma_2$ are
\begin{center}
\begin{tikzpicture}[scale=.7]
\draw[gray!50,dashed,thick] (0,-0.5) -- (0,3.5);
\draw[gray!50,dashed,thick] (-0.5,0) -- (3.5,0);
\draw (1,1) -- (1,2) -- (2,2) -- (2,1) -- cycle;
\draw[gray,thick] (1,1) -- (2,2);
\draw[gray,thick] (2,1) -- (1,2);
\draw (1,1) node[fill,shape=circle,scale=.3] {};
\draw (.8,1) node[anchor=north]  {\scriptsize$\sigma_1(a)$};
\draw (2,1) node[fill,shape=circle,scale=.3] {};
\draw (2.2,1) node[anchor=north]  {\scriptsize$\sigma_1(b)$};
\draw (2,2) node[fill,shape=circle,scale=.3] {};
\draw (2.2,2) node[anchor=south]  {\scriptsize$\sigma_1(c)$};
\draw (1,2) node[fill,shape=circle,scale=.3] {};
\draw (.8,2) node[anchor=south]  {\scriptsize$\sigma_1(d)$};
\draw (1.2,1.5) node[fill,shape=circle,scale=.3] {};
\draw (1.1,1.5) node[anchor=east]  {\scriptsize$z_1$};
\end{tikzpicture}
\hspace{1cm}
\begin{tikzpicture}[scale=.7]
\draw[gray!50,dashed,thick] (0,-0.5) -- (0,3.5);
\draw[gray!50,dashed,thick] (-0.5,0) -- (3.5,0);
\draw (1,1) -- (1,2) -- (2,2) -- (2,1) -- cycle;
\draw[gray,thick] (1,1) -- (2,2);
\draw[gray,thick] (2,1) -- (1,2);
\draw (1,1) node[fill,shape=circle,scale=.3] {};
\draw (.8,1) node[anchor=north]  {\scriptsize$\sigma_2(a)$};
\draw (2,1) node[fill,shape=circle,scale=.3] {};
\draw (2.2,1) node[anchor=north]  {\scriptsize$\sigma_2(b)$};
\draw (2,2) node[fill,shape=circle,scale=.3] {};
\draw (2.2,2) node[anchor=south]  {\scriptsize$\sigma_2(c)$};
\draw (1,2) node[fill,shape=circle,scale=.3] {};
\draw (.8,2) node[anchor=south]  {\scriptsize$\sigma_2(d)$};
\draw (1.8,1.5) node[fill,shape=circle,scale=.3] {};
\draw (1.9,1.5) node[anchor=west]  {\scriptsize$z_2$};
\end{tikzpicture}
\end{center}
As before, for an adelic simplex $S$ with vertices from $a,b,c,d$ it still holds
$S_v=\calO_v^2$ for $v\nmid\infty$.
Any selection of more than two simplices will contain a pair whose infinite
parts have non-trivial intersection, 
thus \eqref{eq:adeliclatticepolytopevolumelowerbound} is best possible.
Again, the point $z=(z_1,z_2)$  indicated in the picture is not contained in any of the four adelic simplices.
\end{exmp}

\section{The symmetric case}\label{sec:symmcase}

Blichfeldt's inequality has recently been improved for symmetric $C\in\calK^m_0$ by
Henze~\cite[(2.4)]{Henze:2012tr}. He proved that 
for $C\in\calK^m_0$ and $\Lambda\in\calL^m$ with
$\dim_\bbR(C\cap\Lambda)\geq m$ it holds
\begin{equation}\label{eq:henze-blichfeldt}
  \abs{C \cap \Lambda}
  \leq \frac{m!}{2^{m}}L_{m}(2)\frac{\vol_{m}(C)}{\det\Lambda}\,,
\end{equation}
where $L_{m}$ is the $m$-th Laguerre polynomial, 
$L_{m}(x)=\sum_{k=0}^{m}\binom{m}{k}\frac{x^k}{k!}$. The bound is
asymptotically sharp for certain cross-polytopes. It was also pointed
out by Henze that for any $0<\epsilon<1$ and $m=m(\epsilon)$ large enough we have 
$L_{m}(2)/2^m\leq 1/(2-\epsilon)^m$. Hence
\eqref{eq:henze-blichfeldt} is an exponential improvement on 
Blichfeldt's inequality for symmetric bodies.

In this section, we show an adelic version of Henze's inequality,
again for an arbitrary number field $K$. 
\begin{prop}\label{thm:adelichenze}
  Let $K$ be an algebraic number field of degree $d$ and
  let $C$ be a symmetric adelic convex body with $\dim_\bbQ(C \cap K^n)=nd$.
  Then
  \[
    \abs[\big]{C \cap K^n}
    \leq\frac{(nd)!}{2^{nd}}L_{nd}(2)\, \frac{\vol_\bbA(C)}{\bigl(\sqrt{\abs{\Delta_K}}\bigr)^n}\,.
  \]
\end{prop}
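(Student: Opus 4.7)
The plan is to transfer the adelic counting problem to a classical lattice point count in $\bbR^{nd}$ via the embedding $\rho\circ\iota\colon K^n\hookrightarrow\bbR^{nd}$ and then apply Henze's inequality \eqref{eq:henze-blichfeldt} directly. By \eqref{eq:adelicandclassciallatticepointsarethesame} one immediately has
\[
  \abs{C\cap K^n}=\abs[\big]{\rho(C_\infty)\cap\rho(\iota(\frakM))},
\]
where $\frakM=\bigcap_{v\nmid\infty}(C_v\cap K^n)$ is a finitely generated $\calO$-submodule of $K^n$ of $\calO$-rank $n$; hence $\rho(\iota(\frakM))$ is a full-rank lattice in $\bbR^{nd}$, and since $C$ is $0$-symmetric, $\rho(C_\infty)=\prod_{v\mid\infty}C_v$ is an $nd$-dimensional $0$-symmetric convex body.

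Next I would verify the dimension hypothesis. The assumption $\dim_\bbQ(C\cap K^n)=nd$ says that $C\cap K^n$ spans $K^n$ as a $\bbQ$-vector space, and since $\rho\circ\iota$ sends any $\bbQ$-basis of $K^n$ to an $\bbR$-basis of $\bbR^{nd}$, this forces $\dim_\bbR\bigl(\rho(C_\infty)\cap\rho(\iota(\frakM))\bigr)=nd$, which is exactly the dimension hypothesis of \eqref{eq:henze-blichfeldt}. Applying \eqref{eq:henze-blichfeldt} to the pair $\bigl(\rho(C_\infty),\rho(\iota(\frakM))\bigr)$ in $\bbR^{nd}$ then yields
\[
  \abs[\big]{\rho(C_\infty)\cap\rho(\iota(\frakM))}\leq\frac{(nd)!}{2^{nd}}L_{nd}(2)\,\frac{\vol_{nd}(\rho(C_\infty))}{\det\rho(\iota(\frakM))}.
\]

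The remaining task is to match the right-hand side with $\vol_\bbA(C)/(\sqrt{\abs{\Delta_K}})^n$. Since $\mu_v$ is twice Lebesgue at complex places, a short calculation gives $\prod_{v\mid\infty}\vol_v(C_v)=2^{sn}\vol_{nd}(\rho(C_\infty))$. For the lattice side, starting from the standard Minkowski covolume $\det\rho(\iota(\calO))=\sqrt{\abs{\Delta_K}}/2^s$ and using the local descriptions $C_v=A_v\calO_v^n$ at each finite place together with the product formula \eqref{eq:productformulaetc}, one arrives at
\[
  \det\rho(\iota(\frakM))=\frac{(\sqrt{\abs{\Delta_K}})^n}{2^{sn}\prod_{v\nmid\infty}\vol_v(C_v)}.
\]
Combined with the definition of $\vol_\bbA$ from Section~\ref{sec:adelicgeometry}, these two identities give $\vol_{nd}(\rho(C_\infty))/\det\rho(\iota(\frakM))=\vol_\bbA(C)/(\sqrt{\abs{\Delta_K}})^n$, as required.

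The main obstacle is precisely this covolume identity: it is immediate when $\frakM$ is $\calO$-free, but needs care when $h_K>1$ and $\frakM$ is only locally free. I would handle the general case either by invoking a Steinitz-type decomposition $\frakM\isom\calO^{n-1}\oplus\fraka$ for some fractional ideal $\fraka$, thereby reducing to the well-known covolume of $\rho(\iota(\fraka))$, or by computing the covolume place-by-place from the local pictures and combining the archimedean and non-archimedean contributions through \eqref{eq:productformulaetc}. Once this identity is in place, no further adelic machinery is needed.
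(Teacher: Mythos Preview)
Your proposal is correct and follows exactly the same route as the paper: embed via $\rho\circ\iota$, use \eqref{eq:adelicandclassciallatticepointsarethesame}, check the dimension hypothesis, apply Henze's inequality \eqref{eq:henze-blichfeldt} in $\bbR^{nd}$, and identify $\vol_{nd}(\rho(C_\infty))/\det\rho(\iota(\frakM))$ with $\vol_\bbA(C)/(\sqrt{\abs{\Delta_K}})^n$. The paper simply asserts this last identity in one line, whereas you spell out the $2^{sn}$ bookkeeping and flag the non-free case $h_K>1$; both treatments are fine, and your Steinitz or local-to-global justification is the standard way to make that step rigorous.
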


\begin{proof}
We use the embedding $\rho\circ\iota : K^n\rightarrow\bbR^{nd}$ and \eqref{eq:adelicandclassciallatticepointsarethesame}.
By Henze's Blichfeldt-type inequality \eqref{eq:henze-blichfeldt}, 
with
\[
  \dim_\bbQ\bigl(C \cap K^n\bigr)=\dim_\bbR\bigl(\rho(C_\infty)\cap\rho\bigl(\iota(\frakM)\bigr)\bigr)\geq nd\,,
\]
we get
\begin{align*}
  \abs[\big]{\rho\bigl(\iota(\frakM)\bigr) \cap \rho( C_\infty)}&\leq
  \frac{(nd)!}{2^{nd}}L_{nd}(2)\frac{\vol_{nd}\bigl(\rho(C_\infty)\bigr)}{\det\bigl(\rho\bigl(\iota(\frakM)\bigr)\bigr)}\\
  &=\frac{(nd)!}{2^{nd}}L_{nd}(2)\, \frac{\vol_\bbA(C)}{\bigl(\sqrt{\abs{\Delta_K}}\bigr)^n}\,.\qedhere
\end{align*}
\end{proof}


Due to the embedding argument into $\bbR^{nd}$ we have to assume that
$\dim_\bbQ(C \cap K^n)=nd$. A more adelic version, i.e., only with the
assumption $\dim_K(C \cap K^n)=n$, and with a better bound for large
degrees was proved by Gaudron,
\cite[p.~173]{Gaudron:2009iu}, using the language of heights and vector
bundles. He showed that 
for  a symmetric adelic convex body $C$ with  $\dim_K(C \cap K^n)=n$
it holds 
\begin{equation}\label{eq:gaudronsblichfeldt}
  \abs[\big]{C \cap K^n} < (5n)^{nd}\, \vol_\bbA(C)\,.
\end{equation}

For arbitrary, i.e., not necessarily $0$-symmetric, adelic convex bodies $C$ we are not aware of any results
except our Theorem \ref{thm:adelicblichfeldttotallyreal}  in the case of totally real fields and
an ``embedded version'' of Blichfeldt's inequality \eqref{eq:classicalblichfeldt}
 \[
     \abs[\big]{C\cap K^n}\leq (nd)!\,
     \frac{\vol_\bbA(C)}{\bigl(\sqrt{\abs{\Delta_K}}\bigr)^n}+nd, \]
which can be proved analogously to Proposition \ref{thm:adelichenze}. Here we
have to assume, again, $\dim_\bbQ(C \cap K^n)=nd$.

\providecommand{\bysame}{\leavevmode\hbox to3em{\hrulefill}\thinspace}
\providecommand{\MR}{\relax\ifhmode\unskip\space\fi MR }
\providecommand{\MRhref}[2]{%
  \href{http://www.ams.org/mathscinet-getitem?mr=#1}{#2}
}
\providecommand{\href}[2]{#2}

\end{document}